\theoremstyle{definition}
\newtheorem{definition}{Definition}[section]
\theoremstyle{plain}
\newtheorem{lemma}[definition]{Lemma}
\newtheorem{theorem}[definition]{Theorem}
\newtheorem{corollary}[definition]{Corollary}
\theoremstyle{remark}
\newcommand{\mycl}{\operatorname{cl}}
\begin{document}
\title[Lipchitz curve selection]{Lipchitz curve selection and its application to Thamrongthanyalak's open problem}
\author[M. Fujita]{Masato Fujita}
\address{Department of Liberal Arts,
	Japan Coast Guard Academy,
	5-1 Wakaba-cho, Kure, Hiroshima 737-8512, Japan}
\email{fujita.masato.p34@kyoto-u.jp}

\begin{abstract}
We solve an open problem posed in Thamrongthanyalak's paper on the definable Banach fixed point property.
A Lipschitz curve selection is a key of our solution.
In addition, we show a definable version of Caristi fixed point theorem. 
\end{abstract}

\subjclass[2020]{Primary 03C64; Secondary 54H25}

\keywords{locally o-minimal structure; Banach fixed point theorem; Caristi fixed point theorem}

\maketitle	

\section{Introduction}\label{sec:intro}
%First, we recall basic notions.
Throughout, $\mathcal F=(F,<,+,\cdot,0,1,\ldots)$ is a definably complete expansion of an ordered field.
`Definable' means `definable with parameters'.
We recall basic notions.
$\mathcal F$ is \textit{definably complete} if every definable subset of $F$ has a supremum and infimum in $F \cup \{\pm \infty\}$ \cite{M}. 
$\mathcal F$ is \textit{locally o-minimal} if, for every $a \in F$ and every definable subset $X$ of $F$, there exists an open interval $I$ such that $a \in I$ and $X \cap I$ is a union of finitely many points and open intervals \cite{TV}.
We call a locally o-minimal structure $\mathcal F$ \textit{o-minimal} when we can choose $I=F$ \cite{D}.
An open core $\mathcal F^\circ$ of $\mathcal F$ is the structure on $F$ generated by open sets definable in $\mathcal F$ \cite{DMS,Fornasiero_locally,MS}. 

In Thamrongthanyalak's paper \cite{Thamrongthanyalak}, the Banach fixed point property (BFPP for short) is investigated.
A definable set $E$ has the \textit{BFPP} if every definable contraction on $E$ has a fixed point. 
Every nonempty definable closed set enjoys BFPP by \cite[1.4]{Thamrongthanyalak}.
$\mathcal F$ possesses the \textit{BFPP} (resp.~\textit{strong BFPP}) if every locally closed definable set (resp.~every definable set) having the BFPP is closed.
%$\mathcal F$ possesses the \textit{strong BFPP} if every definable set having the BFPP is closed.
Thamrongthanyalak showed that structures having o-minimal open core enjoy the strong BFPP and, if $\mathcal F$ possesses the BFPP, $\mathcal F$ has a locally o-minimal open core. 
The following question is posed in \cite{Thamrongthanyalak}.

\begin{quote}
	%\item[(1)] Is the BFPP preserved under elementary equivalence?
	If $\mathcal F$ is definably complete and possesses the strong BFPP, is it o-minimal?
\end{quote}

The following theorem answers the above question in the negative because non-o-minimal definably complete locally o-minimal expansion of an ordered field is already known \cite[Example 3.11]{Fornasiero_locally}.
\begin{theorem}\label{thm:local_case}
	A definably complete locally o-minimal expansion of an ordered field possesses the strong BFPP. 
\end{theorem}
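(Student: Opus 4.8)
The plan is to prove the contrapositive: given a definable set $E\subseteq F^n$ that is \emph{not} closed, I will exhibit a definable contraction $f\colon E\to E$ with no fixed point, which shows that $E$ fails to have the BFPP; since strong BFPP concerns arbitrary definable sets (not merely locally closed ones), this suffices. As $E$ is not closed, fix a point $p\in\mycl(E)\setminus E$; then $\|x-p\|>0$ for every $x\in E$ while $\inf_{x\in E}\|x-p\|=0$. Here $\|\cdot\|$ denotes the Euclidean norm, which is definable because definable completeness supplies square roots, and $x\mapsto\|x-p\|$ is $1$-Lipschitz.

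The heart of the matter is a Lipschitz curve selection, which I would isolate as a separate lemma: for such a $p$ there is a constant $L>0$ and a definable $L$-Lipschitz map $\gamma\colon(0,1]\to E$ with $\lim_{t\to0^+}\gamma(t)=p$. Granting this, the only estimate needed is that $\beta(t):=\|\gamma(t)-p\|\le Lt$, obtained by letting $s\to0^+$ in $\|\gamma(t)-\gamma(s)\|\le L|t-s|$ and using $\gamma(s)\to p$. With the curve in hand the contraction is short and explicit: set the definable functions
\[
  \phi(x)=\min\!\left(\frac{\|x-p\|}{2L},\,1\right)\in(0,1],\qquad f(x)=\gamma(\phi(x)).
\]
Then $f$ maps $E$ into $\gamma((0,1])\subseteq E$, and since $\phi$ is $\tfrac{1}{2L}$-Lipschitz while $\gamma$ is $L$-Lipschitz, $f$ is a $\tfrac12$-contraction. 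It has no fixed point: if $f(x)=x$, put $t^{*}=\phi(x)\in(0,1]$, so $x=\gamma(t^{*})$ and $\|x-p\|=\beta(t^{*})\le Lt^{*}\le L$; hence $\|x-p\|/(2L)\le\tfrac12<1$, the minimum is attained by its first argument, and $\phi(x)=\|x-p\|/(2L)\le t^{*}/2$, contradicting $\phi(x)=t^{*}>0$. This finishes the reduction of the theorem to the selection lemma.

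The main obstacle is therefore the Lipschitz curve selection itself, rather than the fixed-point construction. An ordinary definable curve $(0,1]\to E$ converging to $p$ can be produced from local o-minimality together with definable completeness, the latter securing the limit $p$ and a definable choice of approximants along a shrinking scale, and the monotonicity theorem available in this setting lets me pass to a subinterval on which each coordinate of the curve is $C^{1}$ and monotone. The delicate point is the Lipschitz bound as $t\to0^{+}$: naive reparametrizations (for instance by distance to $p$) need not be Lipschitz. I expect the key step to be showing that the curve germ has a well-defined tangent direction at $p$, after which rotating coordinates so that this direction becomes a coordinate axis exhibits the curve as a graph with bounded slopes, hence Lipschitz.

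Carrying this out without an integration theory, which a general definably complete expansion of an ordered field need not possess, is the part that will require the most care; in particular I would avoid arc-length parametrization and instead control the germ directly through monotonicity and the existence of the tangent direction.
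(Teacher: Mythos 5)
Your reduction is essentially the paper's own: you isolate a Lipschitz curve selection lemma (the paper's Lemma~\ref{lem:Lipschitz}) and build the same contraction $f(x)=\gamma\bigl(\min(\|x-p\|/(2L),1)\bigr)$, supplying in full the no-fixed-point and $\tfrac12$-contraction estimates that the paper only cites from Thamrongthanyalak. Your sketch of the curve selection itself --- extracting a limiting tangent direction at $p$, rotating coordinates so the curve becomes a graph with bounded slope over an axis, and replacing integration by monotonicity and mean-value arguments --- is precisely the strategy the paper executes in Section~\ref{sec:curve_selection} (via definable compactness of $\mathbb H_{n,1}$ and the mean value theorem), though you leave that lemma, which is where the real work lies, as a sketch.
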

We prove a Lipschitz curve selection lemma in Section \ref{sec:curve_selection}. 
Theorem \ref{thm:local_case} follows from the lemma in the same manner as \cite[Theorem A]{Thamrongthanyalak}.
A rough strategy of its proof is only given in the present paper.

%It also provides a partial answer to question (1).
%Locally closed sets and continuous maps between them definable in $\mathcal F$ are definable in $\mathcal F^\circ$.  
Theorem \ref{thm:local_case} implies the `only if' part of the following corollary.
The `if' part was already proved as \cite[Theorem B]{Thamrongthanyalak}.
\begin{corollary}\label{cor:local}
 A definably complete expansion of an ordered field has a locally o-minimal open core if and only if it possesses the BFPP.
\end{corollary}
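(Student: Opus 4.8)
The plan is to treat the two implications separately. The `if' part, that every definably complete expansion of an ordered field possessing the BFPP has a locally o-minimal open core, is precisely \cite[Theorem B]{Thamrongthanyalak}, so no new work is needed there. All the content lies in the `only if' part, and for it I would deduce the BFPP of $\mathcal F$ from Theorem \ref{thm:local_case} applied not to $\mathcal F$ itself but to its open core $\mathcal F^\circ$.

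First I would verify that $\mathcal F^\circ$ satisfies the hypotheses of Theorem \ref{thm:local_case}. It is locally o-minimal by assumption; it is an expansion of the ordered field, since $<$ is open and the graphs of $+$ and $\cdot$ are closed by continuity, so their complements are open and $\mathcal F$-definable and hence the graphs are $\mathcal F^\circ$-definable; and it is definably complete, because every $\mathcal F^\circ$-definable subset of $F$ is a fortiori $\mathcal F$-definable and therefore already has a supremum and infimum in $F \cup \{\pm\infty\}$. Theorem \ref{thm:local_case} then yields that $\mathcal F^\circ$ possesses the strong BFPP.

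The key step is a descent lemma: every locally closed $\mathcal F$-definable set $E \subseteq F^n$ is in fact $\mathcal F^\circ$-definable. Indeed $\mycl(E)$ is closed, so its complement is open and $\mathcal F$-definable, whence $\mycl(E)$ is $\mathcal F^\circ$-definable; and because $E$ is locally closed its frontier $\mycl(E)\setminus E$ is closed and $\mathcal F$-definable, hence $\mathcal F^\circ$-definable by the same argument; therefore $E = \mycl(E)\setminus(\mycl(E)\setminus E)$ is $\mathcal F^\circ$-definable.

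Finally I would combine these. Let $E$ be a locally closed $\mathcal F$-definable set with the BFPP; by the descent lemma $E$ is $\mathcal F^\circ$-definable. Since every $\mathcal F^\circ$-definable contraction on $E$ is in particular an $\mathcal F$-definable contraction, the BFPP of $E$ in $\mathcal F$ forces each such contraction to have a fixed point, so $E$ has the BFPP in $\mathcal F^\circ$ as well; the strong BFPP of $\mathcal F^\circ$ then makes $E$ closed, which is exactly the statement that $\mathcal F$ possesses the BFPP. The step I expect to be decisive is the descent lemma, and more precisely the use of local closedness to ensure the frontier is closed and hence transfers to the open core. Without local closedness the frontier need not be $\mathcal F^\circ$-definable and the argument collapses, which matches the fact that what we obtain for $\mathcal F$ is the BFPP rather than the strong BFPP supplied by Theorem \ref{thm:local_case}.
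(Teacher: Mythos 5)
Your proof is correct and follows the route the paper intends but leaves implicit: the `if' direction is Thamrongthanyalak's Theorem B, and the `only if' direction is obtained by applying Theorem \ref{thm:local_case} to the open core $\mathcal F^\circ$ (which is a definably complete locally o-minimal expansion of the ordered field). Your descent lemma---that every locally closed $\mathcal F$-definable set is $\mathcal F^\circ$-definable because its closure and its frontier are closed---is precisely the detail needed to transfer the strong BFPP of $\mathcal F^\circ$ back to the BFPP of $\mathcal F$, and your observation that this is where local closedness is used (explaining why one only gets the BFPP rather than the strong BFPP for $\mathcal F$) is accurate.
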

%Under the assumption that the structure is definably complete, this equivalence provides an affirmative answer to question (1) because having locally o-minimal open core is preserved under elementary equivalence by \cite[Theorem 3.3]{Fornasiero_locally}.

The Caristi fixed point theorem is a generalization of the Banach fixed point theorem \cite{Caristi}.
A metric space has the Caristi fixed point property if and only if it is complete \cite{Weston}.
We prove a similar equivalence holds in definably complete structures in Section \ref{sec:caristi}.
\begin{theorem}[Definable Caristi fixed point theorem]\label{thm:caristi}
	For a definable subset $X$ of $F^n$, the following are equivalent:
	\begin{enumerate}
		\item[(1)] $X$ is closed.
		\item[(2)] For every definable lower semi-continuous function $f:X \to [0,\infty)$, there exists $x_0 \in X$ such that $S_f(x_0)=\{x_0\}$, where $S_f(x) :=\{y \in X\;|\;\|x-y\| \leq f(x)-f(y)\}$ for every $x \in X$.
	\end{enumerate}
\end{theorem}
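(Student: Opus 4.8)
The plan is to prove the two implications separately, treating $(2)\Rightarrow(1)$ as the elementary direction and $(1)\Rightarrow(2)$ as the substantial one. The organizing device is the relation $y \preceq x :\Longleftrightarrow y \in S_f(x)$, i.e. $\|x-y\|\le f(x)-f(y)$. I would first record that $\preceq$ is a definable partial order on $X$: reflexivity is $0\le 0$; antisymmetry follows by adding $\|x-y\|\le f(x)-f(y)$ and $\|y-x\|\le f(y)-f(x)$ to force $x=y$; and transitivity follows from the triangle inequality. The payoff is that $S_f(x_0)=\{x_0\}$ says precisely that $x_0$ is a $\preceq$-minimal element, so $(2)$ asks exactly for a minimal element of $(X,\preceq)$.

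For $(1)\Rightarrow(2)$, assume $X$ is closed, fix any $a\in X$ (we may assume $X\neq\emptyset$, as $(2)$ already requires it), and set $Y:=S_f(a)$. I claim $Y$ is nonempty, closed and bounded: it contains $a$; it equals $\{y\in X : \|a-y\|+f(y)\le f(a)\}$, a sublevel set of the lower semi-continuous map $y\mapsto\|a-y\|+f(y)$, hence closed in $X$ and therefore in $F^n$ because $X$ is closed; and $\|a-y\|\le f(a)$ on $Y$ gives boundedness. The key reduction is that any $x_0\in Y$ minimizing $f|_Y$ is automatically $\preceq$-minimal in all of $X$: transitivity gives $S_f(x_0)\subseteq S_f(a)=Y$ once $x_0\in Y$, so any $y\in S_f(x_0)$ lies in $Y$ and satisfies $f(y)\le f(x_0)-\|x_0-y\|\le f(x_0)=\min_Y f$, whence $f(y)=f(x_0)$ and $\|x_0-y\|=0$, i.e. $y=x_0$. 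Thus $(1)\Rightarrow(2)$ reduces to the assertion that the definable lower semi-continuous function $f$ attains its infimum on the closed bounded definable set $Y$.

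This attainment is the heart of the matter and the step I expect to be the main obstacle, since lower semi-continuity is weaker than continuity and one cannot quote the definable extreme value theorem directly. I would argue via definable compactness. Put $m:=\inf_Y f\in[0,\infty)$ and, for $t>m$, set $A_t:=\{y\in Y : f(y)\le t\}$; each $A_t$ is nonempty, closed and bounded, and the definable family $\{A_t\}_{t>m}$ is decreasing. It then suffices that a definable decreasing family of nonempty closed bounded subsets of $F^n$ has nonempty intersection, for any point of $\bigcap_{t>m}A_t$ satisfies $f(y)\le t$ for all $t>m$, hence $f(y)=m$. The delicate point is to justify this nested-intersection property in a merely definably complete structure; I would establish it (or cite it) by induction on the ambient dimension $n$, reducing through continuous definable projections to the one-dimensional case, where $\inf$ and $\sup$ of definable sets together with limits of monotone bounded definable functions are supplied directly by definable completeness.

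For $(2)\Rightarrow(1)$ I argue by contraposition. If $X$ is not closed, choose a parameter $p\in\overline{X}\setminus X$ and define the definable, continuous (hence lower semi-continuous) function $f(x):=2\|x-p\|$, which maps $X$ into $[0,\infty)$ and is strictly positive because $p\notin X$. Given any $x\in X$, the point $p$ is a limit point of $X$, so there is $y\in X$ with $\|y-p\|\le\tfrac13\|x-p\|$; such a $y$ differs from $x$, and the triangle inequality gives $\|x-y\|\le\|x-p\|+\|y-p\|\le 2\|x-p\|-2\|y-p\|=f(x)-f(y)$, so $y\in S_f(x)\setminus\{x\}$. Hence $S_f(x)\neq\{x\}$ for every $x\in X$, so this $f$ witnesses the failure of $(2)$, completing the contrapositive.
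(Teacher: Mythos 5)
Your proposal is correct and follows essentially the same route as the paper: transitivity of $S_f$, reduction to the closed bounded set $S_f(a)$, attainment of the infimum of the lower semi-continuous $f$ via the filtered-intersection (definable compactness) property of closed bounded definable sets, and the contrapositive using $f(x)=2\|x-p\|$ with a point $y$ satisfying $3\|y-p\|\le\|x-p\|$. The one step you flag as delicate --- that a definable decreasing family of nonempty closed bounded sets has nonempty intersection --- is exactly what the paper isolates as Lemma \ref{lem:compact} (citing Johnson's o-minimal proof, which transfers to definably complete structures) and applies in Lemma \ref{lem:lower_semi}, so your instinct to cite or establish it separately matches the paper's treatment.
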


\section{Lipschitz curve selection}\label{sec:curve_selection}
Thamrongthanyalak used Fischer's $\Lambda^m$-regular stratification \cite{Fischer} in his paper \cite{Thamrongthanyalak}.
In locally o-minimal structures, such a stratification is not always available.
For our purpose, a weaker substitute called Lipschitz curve selection (Lemma \ref{lem:Lipschitz}) is enough.
%We prove the Lipschitz curve selection lemma.
 
 Let $X$ and $T$ be definable sets.
 The parameterized family $\{S_t\;|\;t \in T\}$ of definable subsets of $X$ is called \textit{definable} if the union $\bigcup_{t \in T} \{t\} \times S_t$ is definable.
 For a set $X$, a definable family $\mathcal C$ of subsets of $X$ is called a \textit{definable filtered collection} if, for any $B_1, B_2 \in \mathcal C$, there exists $B_3 \in \mathcal C$ with $B_3 \subseteq B_1 \cap B_2$. 
 We say that $X$ is \textit{definably compact} if, for every definable filtered collection $\mathcal C$ of nonempty closed subsets of $X$, $\bigcap_{C \in\mathcal C} C$ is non-empty.
 
 \begin{lemma}\label{lem:compact}
 	A definable subset $X$ of $F^n$ is bounded and closed if and only if it is definably compact.
 \end{lemma}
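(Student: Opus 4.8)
The plan is to prove the two implications separately, treating the forward direction (definable compactness implies bounded and closed) as the routine one and the converse as the substantial one.

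For the forward direction, I would argue by contraposition using two explicit definable filtered families. If $X$ is unbounded, set $C_r=\{x\in X:\|x\|\ge r\}$ for $r>0$; each $C_r$ is a nonempty relatively closed subset of $X$, the family $\{C_r\}_{r>0}$ is definable and filtered (since $C_{\max(r_1,r_2)}\subseteq C_{r_1}\cap C_{r_2}$), yet $\bigcap_{r>0}C_r=\emptyset$, so $X$ is not definably compact. If $X$ is not closed, pick $p\in\overline X\setminus X$ and set $D_r=\{x\in X:\|x-p\|\le r\}$; these are nonempty (as $p\in\overline X$), relatively closed, definable and filtered, and $\bigcap_{r>0}D_r=\{x\in X:x=p\}=\emptyset$. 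Either way definable compactness fails, which proves that a definably compact set is bounded and closed.

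For the converse, since $X$ is bounded it lies in a box $B=[-M,M]^n$, and since $X$ is closed it is relatively closed in $B$. I would then reduce everything to three facts. First, closedness is inherited: a relatively closed subset $Z$ of a definably compact set $Y$ is definably compact, because a filtered family of nonempty relatively closed subsets of $Z$ is also such a family for $Y$. Second, the base case $n=1$: a definable filtered family $\{S_t\}_{t\in T}$ of nonempty closed bounded subsets of $F$ has nonempty intersection; here I set $m(t)=\min S_t$ (which exists by definable completeness and lies in $S_t$ by closedness), put $\sigma=\sup_t m(t)\in F$, and observe that for each fixed $t_0$ the set $A_{t_0}=\{m(t):S_t\subseteq S_{t_0}\}$ is a nonempty subset of $S_{t_0}$ with $\sup A_{t_0}=\sigma$ (the filtering is what forces this supremum to be unchanged), so $\sigma\in\overline{A_{t_0}}\subseteq S_{t_0}$; as $t_0$ is arbitrary, $\sigma\in\bigcap_t S_t$. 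Third, a two-factor Tychonoff statement: if $Y\subseteq F^m$ and $Z\subseteq F^k$ are definably compact then so is $Y\times Z$. Granting these, $B$ is definably compact by writing it as an iterated product of copies of $[-M,M]$, and $X$ is definably compact as a relatively closed subset of $B$.

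The heart of the argument, and the step I expect to be the main obstacle, is the two-factor Tychonoff statement. Given a definable filtered family $\{W_t\}_{t\in T}$ of nonempty closed subsets of $Y\times Z$, I would first apply definable compactness of $Y$ to the filtered family $\{\overline{\pi_Y(W_t)}\}_t$ to obtain a point $y_0\in\bigcap_t\overline{\pi_Y(W_t)}$. The delicate point is that $y_0$ need not lie in $\pi_Y(W_t)$, so I would introduce the doubly indexed family $W_t^\varepsilon=\pi_Z\bigl(W_t\cap(\overline{B(y_0,\varepsilon)}\times Z)\bigr)$, whose closures form a definable filtered family of nonempty closed subsets of $Z$; definable compactness of $Z$ then yields $z_0\in\bigcap_{t,\varepsilon}\overline{W_t^\varepsilon}$. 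Unwinding the two closures shows that $(y_0,z_0)$ is approximated by points of each $W_t$ in both coordinates simultaneously, whence $(y_0,z_0)\in\overline{W_t}=W_t$ for every $t$ and the intersection is nonempty. Verifying that each auxiliary collection is genuinely definable and filtered, and that the $\varepsilon$-$\delta$ unwinding is uniform, is where the care is needed.
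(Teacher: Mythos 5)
Your proof is correct, but it takes a genuinely different route from the paper, which gives no argument at all for this lemma: it simply cites \cite[Proposition 3.10]{J} (where the statement is proved for o-minimal structures) and remarks that the same proof works assuming only definable completeness. Your write-up is thus a self-contained alternative. The decomposition into (i) the contrapositive construction of explicit filtered families ($C_r$ and $D_r$) witnessing failure of definable compactness for unbounded or non-closed sets, (ii) heredity of definable compactness under passing to relatively closed subsets, (iii) the one-dimensional case via $\sigma=\sup_t \min S_t$, where filtering forces $\sup A_{t_0}=\sigma$ and closedness of $S_{t_0}$ forces $\sigma\in S_{t_0}$, and (iv) the two-factor Tychonoff step with the doubly indexed family $W_t^{\varepsilon}$, is sound. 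Each auxiliary family is definable (projections and closures of definable families are uniformly definable) and filtered, and the final unwinding showing $(y_0,z_0)\in\overline{W_t}=W_t$ works as you indicate: given $\delta>0$, take $\varepsilon=\delta/2$, pick $z'\in W_t^{\varepsilon}$ with $\|z'-z_0\|<\delta/2$ and then $y'$ with $(y',z')\in W_t$ and $\|y'-y_0\|\le\varepsilon$. Notably, your argument uses only the existence of suprema and infima of definable sets, which in effect verifies the paper's unproved claim that Johnson's o-minimal proof transfers to the definably complete setting. The only points to tighten in a final version are the routine checks you already flag: that $\{m(t):t\in T\}$ and $A_{t_0}$ are definable and bounded so their suprema exist in $F$, and that $\overline{\pi_Y(W_t)}$ and $\overline{W_t^{\varepsilon}}$ land inside $Y$ and $Z$ because definably compact sets are already known to be closed by your forward direction.
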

 \begin{proof}
 	See \cite[Proposition 3.10]{J}, which proves the lemma when the
 	structure is o-minimal; the same proof works when the structure is definably
 	complete.
 \end{proof}
 
We introduce the notations used in the proof of Lemma \ref{lem:Lipschitz}.
Let $\mathbb M_n$ be the set of $n \times n$ matrices with entries in $F$.
%We have $\|A\|_{\text{op} } \leq \|A\| \leq \sqrt{\min\{d,n-d\}} \|A\|_{\text{op} } $.
Set $\mathbb H_{n,d}:=\{A \in \mathbb M_n\;|\; {}^t\!A=A, A^2=A, \operatorname{tr}(A)=d\}$
and $\mathbb H_n=\bigcup_{d=0}^n \mathbb H_{n,d}$.
For every linear subspace $H$ of $F^n$ of dimension $d$, we can find $A \in \mathbb H_{n,d}$ such that $H=AF^n$ and $A$ is the linear projection of $F^n$ onto $H$.
The algebraic set $\mathbb H_{n,d}$ is bounded and closed in $F^{n^2}$.
By Lemma \ref{lem:compact}, $\mathbb H_{n,d}$ is definably compact.
Let $\|A\|$ be the Euclidean norm of a matrix $A \in \mathbb M_n$ under the natural identification of $\mathbb M_n$ with $F^{n^2}$.
We define $\|v\|$ for $v \in F^n$ similarly, and $\|A\|_{\text{op}}:=\sup_{\|v\|=1}\|Av\|$.
The function $\delta:\mathbb H_n \times \mathbb H_n \to F$ is given by $\delta(A,B)=\|B^\perp A\|_{\text{op}}$, where $B^{\perp}=I_n-B$ and $I_n$ is the identity matrix of size $n \times n$.

\begin{lemma}[Lipchitz definable curve selection]\label{lem:Lipschitz}
Suppose that $\mathcal F$ is locally o-minimal.
Let $X$ be a definable subset of $F^n$ and $a \in \partial X$, where $\partial X$ is the frontier of $X$.
Then there exists a definable injective Lipschitz continuous map $\gamma:[0,d] \to F^n$ such that $\gamma(0)=a$ and $\gamma((0,d]) \subseteq X$.
\end{lemma}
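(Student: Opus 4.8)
The plan is to first invoke an ordinary (non-Lipschitz) definable curve selection to reach $a$ from within $X$, and then to upgrade that curve to a Lipschitz one by analysing its limiting secant direction; this is exactly what the projection spaces $\mathbb H_{n,d}$ and the gap function $\delta$ are designed to measure. The guiding observation is that a definable curve which, after a rotation, takes the graph form $\gamma(u)=a+u\,v+w(u)$ with $v$ a fixed unit vector and $w(u)\in v^{\perp}$ is Lipschitz precisely when the transverse part $w$ is Lipschitz. So the whole problem reduces to producing a curve whose approach to $a$ is \emph{tangent} to a fixed direction $v$ and whose transverse deviation is controlled.

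For the first stage, I would fix the continuous definable distance function $x\mapsto\|x-a\|$ and, for each small $r>0$, select the lexicographically least point $\sigma(r)$ of the definably compact set $\overline{X}\cap\{x:\|x-a\|\le r\}$, which is nonempty because $a\in\overline X$. Lexicographic minima are definable, and $\|\sigma(r)-a\|\le r$, so $\sigma(r)\to a$ as $r\to 0^{+}$. Applying a local monotonicity theorem to each coordinate of $\sigma$ shows that, after shrinking to a subinterval $(0,\epsilon]$, the map $\sigma$ is continuous; a second selection removes the values that stick to the frontier $\partial X$, so that $\sigma$ lands in $X$. This yields a continuous definable curve $\sigma\colon(0,\epsilon]\to X$ with $\lim_{r\to 0^{+}}\sigma(r)=a$.

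For the second stage, I would define the secant projection $A(r)\in\mathbb H_{n,1}$ onto the line $F\cdot(\sigma(r)-a)$; this is a definable curve into the definably compact set $\mathbb H_n$ of Lemma~\ref{lem:compact}. Local o-minimality forces each entry of $A(r)$ to be eventually monotone and bounded, so the limit $A_0=\lim_{r\to 0^{+}}A(r)$ exists, and since $\mathbb H_{n,1}$ is closed it lies in $\mathbb H_{n,1}$; let $v$ span its range. By continuity $\delta(A(r),A_0)=\|A_0^{\perp}A(r)\|_{\mathrm{op}}\to 0$, and since $A_0^{\perp}A(r)$ sends the unit vector along $\sigma(r)-a$ to the transverse part of that vector, this says exactly that the component of $\sigma(r)-a$ perpendicular to $v$ is $o$ of its component $u(r)=\langle\sigma(r)-a,v\rangle$ along $v$. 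Reparameterising by the (eventually strictly monotone, by local o-minimality) scalar $u$ puts the curve in the graph form $\gamma(u)=a+u\,v+w(u)$ with $\|w(u)\|=o(u)$. Applying local monotonicity to the coordinates of $w$ upgrades this to a Lipschitz bound on $w$, whence $\gamma$ is Lipschitz; it is injective because $u$ is the parameter, and setting $\gamma(0)=a$ gives the required map on $[0,d]$.

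The main obstacle is the passage from tangency to a genuine Lipschitz estimate \emph{without} a cell decomposition or $\Lambda^m$-regular stratification: local o-minimality furnishes finiteness only on each small interval, whereas a Lipschitz bound must hold uniformly across all scales down to $a$. The two ingredients that make this work are the definable compactness of the Grassmannian $\mathbb H_n$ from Lemma~\ref{lem:compact}, which guarantees that the secant directions converge to an honest line rather than degenerating in rank, and a local monotonicity theorem, which prevents the transverse part $w$ from oscillating and thereby converts the infinitesimal statement $\|w(u)\|=o(u)$ into the uniform slope bound needed for Lipschitz continuity. I expect this final upgrade---that a tame, monotone, $o(u)$ transverse deviation is automatically Lipschitz near $0$---to be the most delicate point of the argument.
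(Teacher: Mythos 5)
Your overall architecture (ordinary curve selection first, then a Lipschitz upgrade via limits in the definably compact set $\mathbb H_{n,1}$, reparametrization by the coordinate along the limit direction, and a final mean-value-type estimate) matches the paper's in outline, but there is a genuine gap at the decisive step, and it stems from working with \emph{secant} directions instead of \emph{tangent} directions. Your limit $A_0=\lim_{r\to 0^+}A(r)$ of the projections onto the lines $F\cdot(\sigma(r)-a)$ only controls difference quotients anchored at the single point $a$: the conclusion $\|w(u)\|=o(u)$ is a Lipschitz-type estimate at $0$ alone. Lipschitz continuity of $\gamma$ on all of $[0,d]$ requires bounding $\|w(u_1)-w(u_2)\|/|u_1-u_2|$ for \emph{all} pairs, and convergence of secants from $a$ says nothing about secants between two nearby points both away from $a$ (a curve can approach $a$ tangentially to $v$ while oscillating transversally with unbounded slope). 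Your proposed repair --- ``applying local monotonicity to the coordinates of $w$'' --- does not close this: a continuous monotone function with $w(u)=o(u)$ (even $O(u^2)$) need not be Lipschitz near $0$, as one sees from monotone functions that are flat except on very short, very steep stretches. What is actually needed is control of the \emph{derivative} $w'$ near $0$: one must invoke a $\mathcal C^1$-decomposition to make $w$ differentiable on some $(0,\delta)$, use local monotonicity of $w'$ (not of $w$) to show $\lim_{u\to 0^+}w'(u)$ exists in $F\cup\{\pm\infty\}$, and then use the mean value theorem together with $w(u)/u\to 0$ to exclude an infinite limit. That is precisely re-deriving the tangent-direction control that the paper obtains directly by applying \cite[Theorem 4.5]{Fuji-compact} to the tangent-space map $\tau:M\to\mathbb H_{n,1}$ of a $\mathcal C^1$ curve, which yields the uniform Jacobian bound $\|J_\gamma(t)\|<1/\sqrt{1-\varepsilon^2}$ and hence the Lipschitz constant via the mean value theorem. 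You flag this upgrade as the most delicate point, correctly, but as written it is asserted rather than proved, and the stated form of the assertion is false.

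A secondary weakness is in your first stage: the lexicographic minimum of $\mycl(X)\cap\{x:\|x-a\|\le r\}$ may lie in $\partial X\setminus X$ for every small $r$, and the ``second selection'' that is supposed to push $\sigma$ into $X$ while preserving continuity and $\sigma(r)\to a$ is not described; this is exactly the nontrivial content of the curve selection lemma that the paper imports as \cite[Lemma 5.16]{Fornasiero_locally}, so you should either cite it or give the argument. The remaining ingredients of your plan (definable compactness of $\mathbb H_{n,1}$, eventual strict monotonicity of the reparametrizing coordinate, injectivity from the graph form) are sound and parallel the paper's proof.
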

\begin{proof}
		We assume that $a$ is the origin of $F^n$ for simplicity.
By \cite[Lemma 5.16]{Fornasiero_locally}, there exist $d'>0$ and a definable continuous map $f:[0,d') \to X$ such that $f(0)=a$ and $f(t) \in X$ for $t >0$.
We may assume that $f$ is injective by \cite[Theorem 5.1]{Fornasiero_locally}.
The definable set $M:=f((0,d'))$ is decomposed into finitely many definable $\mathcal C^1$ submanifolds using \cite[Theorem 5.11]{Fornasiero_locally} in the same manner as \cite[Theorem 5.6]{Fornasiero_locally}.
For some $e>0$, $f((0,e))$ coincides with one of them by local o-minimality.
By setting $d'=e$, we may assume that $M$ is a definable $\mathcal C^1$ submanifold of $F^n$.
We have $\dim M=1$ by \cite[Proposition 2.8(6)]{FKK}.

Fix a sufficiently small $\varepsilon>0$.
Let $\tau:M \to \mathbb  H_{n,1}$ be the definable continuous map sending $x \in M$ to the matrix which represents the projection onto the tangent space of $M$ at $x$.
Since $\mathbb H_{n,1}$ is definably compact, \cite[Theorem 4.5]{Fuji-compact} yields $A:=\lim_{t \to 0}\tau(f(t))$.
By linear change of coordinates, we may assume that $AF^n=F \times \{0\}^{n-1}$.
Let $\overline{\tau}:M \cup \{a\} \to \mathbb H_{n,1}$ be the extension of $\tau$ given by $\overline{\tau}(a)=A$.
The map $\overline{\tau} \circ f$ is continuous.
Therefore, we may assume that $\|\tau(f(t))-A\|<\varepsilon$ for $0<t<d'$ by taking a smaller $d'$ if necessary. 
In addition, the tangent space $\tau(f(t))F^n$ of $M$ at $f(t)$ is not orthogonal to $F \times \{0\}^{n-1}$ because $\varepsilon$ is sufficiently small.

Let $\pi:F^n \to F$ be the coordinate projection onto the first coordinate.
For every subset $S$ of $F^n$ and $u \in F$, we denote $S \cap \pi^{-1}(u)$ by $S_u$.
We show that $M_u$ is closed and discrete for every $u \in \pi(M)$. 
For contradiction, assume that $\dim M_u>0$ for some $u \in \pi(M)$.
We have $\dim f^{-1}(M_u)>0$ by \cite[Proposition 2.8(6)]{FKK}.
A nonempty open interval $I$ is contained in $f^{-1}(M_u)$.
The tangent space of $M$ at $f(t)$, $t \in I$, is orthogonal to the first coordinate axis, which is absurd.
We have shown that $\dim M_u=0$ for every $u \in \pi(M)$.
This implies that $M_u$ is discrete and closed by \cite[Proposition 2.8(1)]{FKK}.
If $0 \in \pi(M)$, the set $f^{-1}(M \cap \pi^{-1}(0))$ is discrete and closed by \cite[Proposition 2.8(1),(6)]{FKK}.
Therefore, we may assume that $0 \notin \pi(M)$ by taking a smaller $d'>0$.

Let $N:=f([0,d'/2])$. 
Observe that $N$ is definably compact by \cite[Proposition 1.10]{M} and Lemma \ref{lem:compact}.
Observe that $a \in N$ and $0 \in \pi(N)$. 
%We can easily deduce that $a \in \mycl(N \setminus \{a\})$, where $\mycl(\cdot)$ denotes the closure.
We show $0 \in \mycl(\pi(N) \setminus \{0\})=\mycl(\pi(N \setminus \{a\}))$, where $\mycl(\cdot)$ denotes the closure in $F$.
Assume for contradiction that $0 \notin \mycl(\pi(N) \setminus \{0\})$.
By local o-minimality, $0$ is isolated in $\pi(N)$.
We have $\pi(N)=\{0\}$ because $\pi(N)$ is definably connected by \cite[Corollary 1.5]{M}.
This deduces that $\dim M \cap \pi^{-1}(0) \geq \dim N \cap \pi^{-1}(0)=\dim N=1$, which is a contradiction.
We have shown that $0 \in \mycl(\pi(N) \setminus \{0\})$.
This deduces, by local o-minimality, that a closed interval one of whose endpoints is $0$ is contained in $\pi(N)$.
After a linear transformation, we may assume that the closed interval $[0,d]$ is contained in $\pi(N)$ for some $d>0$.

By \cite[Lemma 5.15]{Fornasiero_locally}, we can find a definable map $\gamma:[0,d] \to N$ such that $\pi(\gamma(u))=u$ for $0 \leq u \leq d$.
We may assume that $\gamma|_{(0,d)}$ is of class $\mathcal C^1$ and $\gamma|_{(0,d]}$ is continuous by \cite[Theore 5.11]{Fornasiero_locally} by taking a smaller $d>0$.
$\gamma$ is continuous at $0$.
In fact, by \cite[Theorem 4.5]{Fuji-compact}, $\lim_{t \to 0}\gamma(t)$ exists in the definably compact set $N$.
The continuity of $\pi$ implies $\pi(\lim_{t \to 0}\gamma(t))=\lim_{t \to 0} \pi(\gamma(t))=0$.
This implies that $\lim_{t \to 0}\gamma(t)=a=\gamma(0)$ because $N \cap \pi_1^{-1}(0)$ is the singleton $\{a\}$.

We prove $\|J_{\gamma}(t)\| < {1}/{\sqrt{1-\varepsilon^2}},$
where $J_{\gamma}$ is the Jacobian matrix of $\gamma$.
Fix $0<t<d$.
Let $V=\tau(\gamma(t))$ and $\omega:VF^n \to F$ be the linear bijection defined by $w(v)=\pi(v)$.
Let $e=(1)$ be the unit vector in $F$.
We have $J_{\gamma}(t) = \omega^{-1}(e)$.
%	
%	We define $\delta:\mathbb H_n \times \mathbb H_n \to F$ by $\delta(A,B)=\|B^\perp A\|_{\text{op}}$, where $B^{\perp}=I_n-B$.
%	The inequality $\delta(A,B) \leq \|A-B\|$ holds by \cite[Proposition 3.1(c)]{Fischer} and the equality $\delta(A,B)^2+\delta(A,B^{\perp})^2 \geq 1$ is easily deduced from the definition of $\delta$.
%	
By the definition of $M$ and \cite[Proposition 3.1(c)]{Fischer}, we have $\delta(V,A) \leq \|V-A\|<\varepsilon$.
This inequality and the inequality $\delta(V,A)+\delta(V,A^{\perp}) \geq 1$ imply $\|AV\|_{\text{op}}=\delta(V,A^{\perp})>\sqrt{1-\varepsilon^2}$.
Let $w$ be a unit vector in $VF^n$.
By the definitions of $V$ and $A$, we have $\|AV\|_{\text{op}}=\|\omega(w)\|$.
Therefore, $\|J_{\gamma}(t)\|=\|\omega^{-1}(e)\|=1/\|AV\|_{\text{op}}<1/\sqrt{1-\varepsilon^2}$.

Put $L=\sqrt{n/(1-\varepsilon^2)}$.
We show that $\gamma$ is $L$-Lipchitz continuous.
To show this, fix $0 \leq b_1 < b_2 \leq d$.
Let $\gamma_i(t)$ be the $i$th coordinate of $\gamma(t)$.
The mean value theorem is deduced only from the extreme value theorem.
Therefore, the mean value theorem holds in $\mathcal F$ by \cite[p.1786]{M}.
Apply it to $\gamma_i$, then we have $|\gamma_i(b_2)-\gamma_i(b_1)| \leq \sup_{b_1<t<b_2}|\gamma_i'(t)| (b_2-b_1) \leq \|J_{\gamma}\|(b_2-b_1)<(b_2-b_1)/\sqrt{1-\varepsilon^2}$.
This deduces $\|\gamma(b_2)-\gamma(b_1)\| \leq L(b_2-b_1)$, which means that $\gamma$ is $L$-Lipchitz continuous.
\end{proof}

\begin{proof}[Proof of Theorem \ref{thm:local_case}]
	For every non-closed definable subset $E$ of $F^n$, we have only to construct a definable contraction on $E$ that has no fixed point.
	Let $a \in \partial E$.
	By Lemma \ref{lem:Lipschitz}, we can pick an $L$-Lipschitz definable map $\gamma:[0,d] \to E \cup \{a\}$ for some $L>0$ such that $\gamma(0)=a$ and $\gamma((0,d]) \subseteq E$.
	Let $H:E \to E$ be the definable map defined by $H(x)=\gamma(\min(d,\|x-a\|)/2L)$.
	We can show that $H$ is a definable contraction on $E$ having no fixed point in the same manner as the proof of \cite[Theorem A]{Thamrongthanyalak}.
	We omit the details.
\end{proof}

\section{Caristi fixed point theorem}\label{sec:caristi}

We prove Theorem \ref{thm:caristi} in this section.
\begin{lemma}\label{lem:lower_semi}
	Let $X$ be a definable, bounded and closed subset of $F^n$ and $f:X \to [0,\infty)$ be a definable lower semi-continuous function.
	Then, $\inf f(X) \in f(X)$. 
\end{lemma}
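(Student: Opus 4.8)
The plan is to realize this as the definable analogue of the classical fact that a lower semi-continuous function on a compact space attains its infimum, replacing topological compactness with the definable compactness furnished by Lemma \ref{lem:compact}. Since $X$ is bounded and closed, it is definably compact, so the strategy is to express the set of (near-)minimizers as the intersection of a definable filtered collection of nonempty closed subsets of $X$ and then invoke definable compactness to conclude that this intersection is nonempty.

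First I would set $m := \inf f(X)$. Because $f$ takes values in $[0,\infty)$ and $X$ is nonempty, $f(X)$ is a nonempty definable subset of $F$ bounded below by $0$, so definable completeness guarantees that $m$ is a well-defined, and in particular definable, element of $[0,\infty)$; it is finite. It then suffices to produce a point $x_0 \in X$ with $f(x_0) = m$, since this exhibits $m = \inf f(X)$ as a member of $f(X)$.

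Next, for each $c \in F$ with $c > m$ I would consider the sublevel set $C_c := \{x \in X \mid f(x) \leq c\}$. By the definition of the infimum each $C_c$ is nonempty, and each is a subset of $X$; crucially, lower semi-continuity of $f$ makes each sublevel set closed, since the complement $\{x \in X \mid f(x) > c\}$ is open in $X$, and as $X$ itself is closed in $F^n$ each $C_c$ is closed in $F^n$. The family $\mathcal C := \{C_c \mid c > m\}$ is definable, because the index set $(m,\infty)$ is definable (here I use that $m$ is definable) and $\bigcup_{c > m} \{c\} \times C_c$ is definable, and it is filtered: given $c_1, c_2 > m$, the value $c_3 := \min(c_1,c_2)$ again exceeds $m$ and satisfies $C_{c_3} \subseteq C_{c_1} \cap C_{c_2}$.

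Finally, applying definable compactness of $X$ via Lemma \ref{lem:compact} to the collection $\mathcal C$, the intersection $\bigcap_{c>m} C_c$ is nonempty; choosing $x_0$ in it yields $f(x_0) \leq c$ for every $c > m$, hence $f(x_0) \leq m$. Combined with $f(x_0) \geq \inf f(X) = m$ this gives $f(x_0) = m$, so $\inf f(X) = f(x_0) \in f(X)$. The step I expect to require the most care is verifying the hypotheses of definable compactness: one must confirm that $\mathcal C$ is a genuine definable family (which hinges on the definability of $m$) and that lower semi-continuity indeed yields closed sublevel sets. Once these points are secured, the conclusion is immediate.
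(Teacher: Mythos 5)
Your proof is correct and follows essentially the same route as the paper: both express the set of minimizers as the intersection of the definable filtered collection of closed sublevel sets and invoke definable compactness of $X$ via Lemma \ref{lem:compact}. The only cosmetic difference is that you index the sublevel sets by $c \in (m,\infty)$ while the paper indexes them by $t \in f(X)$; both indexings yield the same argument.
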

\begin{proof}
	Set $T:=f(X)$.
	For every $t \in T$, put $C_t:=\{x \in X\;|\; f(x) \leq t\}$.
	$C_t$ is a closed subset of $X$ because $f$ is lower semi-continuous.
	Consider the family $\mathcal C=\{C_t\;|\; t \in T\}$, which is a definable filtered collection.
	By Lemma \ref{lem:compact}, there exists $x_0 \in \bigcap_{t \in T}C_t$.
	We have $f(x_0) \leq t$ for every $t \in T$.
	This implies that $f(x_0)=\inf T$.
\end{proof}

\begin{proof}[Proof of Theorem \ref{thm:caristi}]
	We denote $S_f(x)$ by $S(x)$ in the proof.
	
	$(1) \Rightarrow (2):$
	We first show that $S(y) \subseteq S(x)$ whenever $y \in S(x)$.
	We pick an arbitrary $z \in S(y)$.
	We have $\|z-x\| \leq \|y-z\|+\|x-y\| \leq f(z)-f(y) + f(y)-f(x)=f(z)-f(x)$.
	This means that $z \in S(x)$.
	
	We next reduce to the case where $X$ is bounded in $F^n$.
	Take an arbitrary point $x' \in X$.
	Put $X'=S(x')$.
	The definable closed set $X'$ is bounded because, for every element $z \in X'$, we have $\|z-x'\| \leq f(x')$.
	For every $x \in X'$, we have $S_f(x) \subseteq S(x')=X'$.
	Therefore, we may assume that $X$ is bounded by replacing $X$ with $X'$.
	
	Since $X$ is closed and bounded, we can find $x_0 \in X$ such that $f(x_0)=\inf f(X)$ by Lemma \ref{lem:lower_semi}.
	We have $f(y) \geq f(x_0)$ for every $y \in X$.
	This implies that $S(x_0)=\{x_0\}$.
	
	$(2) \Rightarrow (1):$
	Assume that $X$ is not closed.
	Let $p$ be a point in the frontier of $X$.
	We define the definable function $f:X \to [0,\infty)$ by $f(x)=2\|x-p\|$.
	$f$ is continuous, and it is also lower semi-continuous.
	We show that $S(x) \neq \{x\}$ for every $x \in X$.
	We fix $x \in X$ to show this relation.
	Let $y$ be an arbitrary point in $X$.
	Triangle inequality implies $\frac{1}{2}(f(x)+f(y)) \geq \|x-y\|$.
	Therefore, we have $f(x)-f(y) \geq \|x-y\| + \frac{1}{2}(f(x)-3f(y))$.
	If we choose $y_0 \in X$ sufficiently close to $p$, we have $3f(y_0) <f(x)$.
	We obtain $y_0 \in S(x)$ and $S(x) \neq \{x\}$. 
\end{proof}

\end{document}